\newtheorem{theorem}{Theorem}[section]
\theoremstyle{definition}
\theoremstyle{remark}
\numberwithin{equation}{section}
\newcommand{\Z}{\mathbf Z}
\newcommand{\Zplus}{\Z_+}
\newcommand{\C}{\mathbf C}
\newcommand{\vectj}[2]{\mbox{$#1_1,#1_2,\ldots ,#1_{#2}$}}  %e.g. \vectj{x}{n}
\newcommand{\qf}[3]{\text{$(#1;\,#2)_{#3}$}}
\newcommand{\qnum}[1]{\text{$[#1]_{q}$}}
\newcommand{\qhyper}[6]{\,{}_{#1}\phi_{#2}\left(\!\!%
            \begin{array}{cc}{#3}\\[-0.1ex]{#4} \end{array}
            \Big|\,{#5};{#6}\right)}
\begin{document}
\title[Symmetric generalization of $q$-{S}turm-{L}iouville problems]{A symmetric generalization of {S}turm-{L}iouville problems in $q$-difference spaces}

\author{I. Area}
\address{Departamento de Matem\'atica Aplicada II,
              E.E. de Telecomunicaci\'on,
              Universidade de Vigo,
              Campus Lagoas-Marcosende,
              36310 Vigo, Spain.}

\email{area@uvigo.es}
\thanks{The work of I. Area has been partially supported by the Ministerio de Ciencia e Innovaci\'on of Spain under grants MTM2009--14668--C02--01 and MTM2012--38794--C02--01, co-financed by the European Community fund FEDER.}

\author{M. Masjed-Jamei}
\address{Department of Mathematics, K.N.Toosi University of Technology, P.O. Box 16315--1618, Tehran, Iran.}

\email{mmjamei@kntu.ac.ir, mmjamei@yahoo.com}
\thanks{The work of M. Masjed-Jamei has been supported by a grant from``Iran National Science Foundation".}

\subjclass[2010]{Primary 34B24, 39A13, \ Secondary 33C47, 05E05}

\date{\today}

\begin{abstract}
Classical {S}turm-{L}iouville problems of $q$-difference variables are extended for symmetric discrete functions such that the corresponding solutions preserve the orthogonality property. Some illustrative examples are given in this sense.
\end{abstract}

\maketitle

\section{Introduction}

A regular Sturm-Liouville problem of continuous type is a boundary value problem in the form
\begin{equation}\label{eq:1}
\frac{d}{dx} \left( k(x) \frac{dy_{n}(x)}{dx} \right) + \left(\lambda_{n} \varrho(x)-q(x) \right) y_{n}(x)=0 \qquad (k(x)>0, \varrho(x)>0),
\end{equation}
which is defined on an open interval, say $(a,b)$, and has the boundary conditions
\begin{equation}\label{eq:2}
\alpha_{1} y(a) + \beta_{1} y'(a)=0, \quad \alpha_{2} y(b) + \beta_{2} y'(b)=0,
\end{equation}
where $\alpha_{1}, \alpha_{2}$ and $\beta_{1}, \beta_{2}$, are given constants and $k(x)$, $k'(x)$, $q(x)$, and $\varrho(x)$ in (\ref{eq:1}) are to be assumed continuous for $x \in [a,b]$. In this sense, if one of the boundary points $a$ and $b$ is singular (i.e. $k(a) = 0$ or $k(b) = 0$), the problem is called a singular {S}turm-{L}iouville problem of continuous type.

Let $y_{n}$ and $y_{m}$ be two eigenfunctions of equation (\ref{eq:1}). According to {S}turm-{L}iouville theory \cite{MR922041}, they are orthogonal with respect to the weight function $\varrho(x)$ under the given conditions (\ref{eq:2}) so that we have
\begin{equation}\label{eq:3}
\int_{a}^{b} \varrho(x) y_{n}(x) y_{m}(x) dx =
\left( \int_{a}^{b} \varrho(x) y_{n}^{2}(x) dx \right) \delta_{n,m}=
\| y_{n} \|_{2}^{2} \begin{cases} 0 & n \neq m, \\ 1 & n=m. \end{cases}
\end{equation}

Many important special functions in theoretical and mathematical physics are solutions of a regular or singular {S}turm-{L}iouville problem that satisfy the orthogonality condition (\ref{eq:3}). For instance, the associated Legendre functions \cite{MR1810939}, Bessel functions \cite{MR922041}, Fourier trigonometric sequences \cite{MR0105586}, ultraspherical functions \cite{MR922041} and Hermite functions \cite{MR922041} are some specific continuous samples. Most of these functions are symmetric (i.e. $\phi_{n}(-x)=(-1)^{n} \phi_{n}(x)$) and have found valuable applications in physics and engineering. Hence, if we can somehow extend these examples symmetrically and preserve their orthogonality property, it seems that we will be able to find new applications, which logically extend the previous established applications. Recently in \cite{MR2374588}, this matter has been done for continuous variables and the classical equation (\ref{eq:1}) has been symmetrically extended in the following form
\begin{equation}\label{eq:4}
A(x) \phi_{n}''(x) + B(x) \phi_{n}'(x) + \left( \lambda_{n} C(x) + D(x) + \sigma_{n} E(x)\right) \phi_{n}(x)=0,
\end{equation}
where $A(x)$, $D(x)$, $E(x)$ and $(C(x)>0)$ are even functions, $B(x)$ is an odd function and 
\begin{equation}\label{eq:sigma}
\sigma_{n}=\frac{1-(-1)^{n}}{2} = \begin{cases}
0 & n \text{ even}, \\
1 & n \text{ odd}.
\end{cases} 
\end{equation}
It has been proved in \cite{MR2374588} that under some specific conditions, the symmetric solutions of equation (\ref{eq:4}) are orthogonal and preserve the orthogonality interval, in other words:
\begin{theorem}\cite{MR2374588}\label{th:1}
Let $\phi_{n}(-x)=(-1)^{n} \phi_{n}(x)$ be a sequence of symmetric functions that satisfies the differential equation (\ref{eq:4}), where $\{Ê\lambda_{n} \}_{n}$ is a sequence of constants. If $A(x)$, $(C(x) > 0)$, $D(x)$ and $E(x)$ are even real  functions and $B(x)$ is odd then
\begin{equation*}
\int_{-\nu}^{\nu} P^{*}(x) \phi_{n}(x) \phi_{m}(x) dx = \left( \int_{-\nu}^{\nu}P^{*}(x) \phi_{n}^{2}(x) dx \right) \delta_{n,m},
\end{equation*}
where
\begin{equation}\label{eq:6}
P^{*}(x)= C(x) {\exp} \left( \int \frac{B(x)-A'(x)}{A(x)} dx \right) = \frac{C(x)}{A(x)} {\exp} \left( \int \frac{B(x)}{A(x)} dx \right).
\end{equation}
Of course, the weight function defined in (\ref{eq:6}) must be positive and even on $[-\nu,\nu]$ and the function
\begin{equation*}
A(x)K(x)=A(x) {\exp} \left( \int \frac{B(x)-A'(x)}{A(x)} dx\right) = {\exp} \left( \int \frac{B(x)}{A(x)} dx\right),
\end{equation*}
must vanish at $x = \nu$ , i.e. $A(\nu) K(\nu) = 0$. In this way, since $K(x) = P^{*}(x) /C(x)$ is an even function so $A(-\nu) K(-\nu) = 0$ automatically.
\end{theorem}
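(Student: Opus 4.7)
The plan is to adapt the classical Sturm-Liouville orthogonality argument, with one new ingredient: the parity structure of the extra term $\sigma_n E(x)\phi_n(x)$ will eliminate itself via the symmetry of the eigenfunctions. First I would recast equation (\ref{eq:4}) in self-adjoint form. Multiplying by the integrating factor $K(x)=\exp\bigl(\int (B(x)-A'(x))/A(x)\,dx\bigr)$, one verifies that $(AK)'=BK$, so the equation becomes
\begin{equation*}
\bigl(A(x)K(x)\phi_n'(x)\bigr)' + K(x)\bigl(\lambda_n C(x) + D(x) + \sigma_n E(x)\bigr)\phi_n(x)=0.
\end{equation*}
Note that $P^{*}(x)=C(x)K(x)$, and that $K(x)$ is even because $(B-A')/A$ is odd (odd minus odd, divided by even).

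Next I would apply the standard Lagrange-type manipulation: write the self-adjoint equation for $\phi_n$ multiplied by $\phi_m$, subtract the analogous expression with $n$ and $m$ exchanged, and integrate over $[-\nu,\nu]$. After integration by parts, the boundary term reads $\bigl[A(x)K(x)(\phi_n'\phi_m-\phi_m'\phi_n)\bigr]_{-\nu}^{\nu}$, which vanishes by the hypothesis $A(\nu)K(\nu)=0$ together with the automatic equality $A(-\nu)K(-\nu)=0$ pointed out in the statement. What remains is the identity
\begin{equation*}
(\lambda_n-\lambda_m)\int_{-\nu}^{\nu} P^{*}(x)\phi_n(x)\phi_m(x)\,dx + (\sigma_n-\sigma_m)\int_{-\nu}^{\nu} K(x)E(x)\phi_n(x)\phi_m(x)\,dx = 0.
\end{equation*}

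Finally I would perform a parity case analysis on $(n,m)$ with $n\neq m$. If $n$ and $m$ have the same parity, then $\sigma_n=\sigma_m$, the second integral drops out, and orthogonality follows as long as $\lambda_n\neq\lambda_m$. If $n$ and $m$ have opposite parity, then $\phi_n\phi_m$ is an odd function; since $P^{*}$, $K$, and $E$ are all even, both integrands are odd, and each integral vanishes on the symmetric interval $[-\nu,\nu]$ — in particular $\int_{-\nu}^{\nu} P^{*}\phi_n\phi_m\,dx=0$ directly.

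The main obstacle is essentially bookkeeping: one must check carefully that $K(x)$ inherits evenness from the even/odd data, so that the parity cancellation in the $E$-term works and the boundary condition automatically extends from $\nu$ to $-\nu$. Once these parity facts are in hand, the rest is a routine Sturm-Liouville computation.
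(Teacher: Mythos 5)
Your proposal is correct and follows essentially the same route as the paper: the paper cites Theorem \ref{th:1} from \cite{MR2374588} without reproving it, but its own proof of the $q$-difference analogue (Theorem 2.1) uses exactly your strategy --- self-adjoint form via the integrating factor, the Lagrange-type subtraction, vanishing of the boundary term from $A(\pm\nu)K(\pm\nu)=0$, and a parity case analysis in which the $E$-term either cancels (same parity) or has an odd integrand over the symmetric interval (opposite parity). The only cosmetic difference is that in the opposite-parity case you also note that $\int_{-\nu}^{\nu}P^{*}\phi_{n}\phi_{m}\,dx$ vanishes directly by oddness, which is a slightly cleaner way to close that case than the paper's reduction to $(\lambda_{n}-\lambda_{m})\int P^{*}\phi_{n}\phi_{m}\,dx=0$.
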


By using theorem \ref{th:1}, many symmetric special functions of continuous type have been generalized in \cite{MR2270049,MR2374588,MR2421844,MR2601301,MR2467682,MR2743534,1220.33011}. Recently in \cite{MASJEDAREA} we have generalized usual Sturm-Liouville problems with symmetric solutions in discrete spaces on the linear lattice $x(s)=s$, and introduced a basic class of symmetric orthogonal polynomials of a discrete variable with four free parameters \cite{MASJEDAREA2}. The main aim of this paper is to prove that the aforesaid extension also holds in a $q$-difference case and for a homogeneous second-order $q$-difference equation.

\section{A symmetric generalization of {S}turm-{L}iouville problems in $q$-difference spaces}

Before stating the results, we should consider some preliminaries and notations.

Let $\mu \in {\mathbf{C}}$ be fixed. A set $A \subseteq {\mathbf{C}}$ is called a $\mu$-geometric set if for $x \in A$, $\mu x \in A$. Let $f$ be a function defined on a $q$-geometric set $A \subseteq {\mathbf{C}}$. The $q$-difference operator is defined by
\begin{equation*}
D_{q}f(x)=\frac{f(qx)-f(x)}{(q-1)x}, \qquad x \in A \setminus \{0\}.
\end{equation*}
If $0 \in A$, we say that $f$ has the $q$-derivative at zero if the limit
\begin{equation*}
\lim_{n \to \infty} \frac{f(x q^{n})-f(0)}{xq^{n}} \qquad (x \in A),
\end{equation*}
exists and does not depend on $x$. We then denote this limit by $D_{q} f(0)$.

We shall also need the $q$-integral (the inverse of the $q$-derivative operator) introduced by J. Thomae \cite{Thomae1969} and F.H. Jackson \cite{JACKSON1910} ---see also \cite{MR2128719,MR2191786,MR2656096}--- which is defined as
\begin{equation}\label{eq:11}
\int_{0}^{x} f(t) \text{d}_qt = x(1-q)\sum_{n=0}^{\infty} q^n f(q^n x)\,, \qquad (x \in A),
\end{equation}
provided that the series converges, and for the interval $[a,b]$ we have based on (\ref{eq:11}) that
\begin{equation}\label{eq:10}
\int_{a}^{b} f(t) d_{q}t = \int_{0}^{b} f(t) d_{q}t - \int_{0}^{a} f(t) d_qt, \qquad (a,b \in A).
\end{equation}
Relations (\ref{eq:11}) and (\ref{eq:10}) directly yield
\begin{equation}\label{eq:12}
\int_{-b}^{b} f(t) d_{q}t = b(1-q) \sum_{n=0}^{\infty} q^{n} \left( f(bq^{n})+f(-b q^{n}) \right), \qquad (b \in A).
\end{equation}
This means that if $f$ is an odd function, then  $\displaystyle{\int_{-b}^{b} f(t) d_{q}t=0}$. Moreover, if $b \to \infty$, (\ref{eq:12}) changes to
\begin{equation*}
\int_{-\infty}^{\infty} f(t) d_{q}t =(1-q) \sum_{n=-\infty}^{\infty} q^{n} \left( f(q^{n}) + f(-q^{n})\right).
\end{equation*}

A function $f$ which is defined on a $q$-geometric set $A$ with $0 \in A$ is said to be $q$-regular at zero if $\displaystyle{\lim_{n \to \infty} f(xq^{n})=f(0)}$ for every $x \in A$. The rule of $q$-integration by parts is denoted by
\begin{equation}\label{eq:14}
\int_{0}^{a} g(x) D_{q}f(x) d_{q}x=(fg)(a)-\lim_{n \to \infty} (fg)(aq^{n}) - \int_{0}^{a} D_{q} g(x)f(qx)d_{q}x.
\end{equation}
If $f,g$ are $q$-regular at zero, the $\displaystyle{\lim_{n \to \infty} (fg)(aq^{n})}$ on the right-hand side of (\ref{eq:14}) can be replaced by $(fg)(0)$.

For $0 < R \leq \infty$ let $\Omega_{R}$ denote the disc $\{ z \in {\mathbf{C}} \, : \, \vert z \vert < R\}$. The $q$-analogue of the fundamental theorem says:
Let $f: {\Omega_{R}} \to {\mathbf{C}}$ be $q$-regular at zero and $\theta \in \Omega_{R}$ be fixed. Define
\begin{equation*}
F(x)=\int_{\theta}^{x} f(t) d_{q}t \qquad (x \in \Omega_{R}).
\end{equation*}
Then, the function $F$ is $q$-regular at zero, $D_{q}F(x)$ exists for any $x \in \Omega_{R}$ and $D_{q}F(x)=f(x)$. Conversely, if $a,b \in \Omega_{R}$ then
\begin{equation*}
\int_{a}^{b} D_{q}f(t) d_{q}t=f(b)-f(a).
\end{equation*}

The function $f$ is $q$-integrable on $\Omega_{R}$ if $\displaystyle{ \vert f(t) \vert d_{q}t}$ exists for all $x \in \Omega_{R}$.

\begin{theorem}
Let $\phi_{n}(x;q)=(-1)^{n} \phi_{n}(-x;q)$ be a sequence of symmetric functions that satisfies the $q$-difference equation 
\begin{multline}\label{eq:17}
A(x) D_{q} D_{q^{-1}} \phi_{n}(x;q) + B(x) D_{q} \phi_{n}(x;q) \\ 
+ \left( \lambda_{n,q} C(x) + D(x) + \sigma_{n} E(x) \right) \phi_{n}(x;q)=0,
\end{multline}
where $A(x)$, $B(x)$, $C(x)$, $D(x)$ and $E(x)$ are independent functions, $\sigma_{n}$ is defined in (\ref{eq:sigma})
and $\lambda_{n,q}$ is a sequence of constants. If $A(x)$, $(C(x)>0)$, $D(x)$ and $E(x)$ are even functions and $B(x)$ is odd, then
\begin{equation*}
\int_{-\alpha}^{\alpha} W^{*}(x;q) \phi_{n}(x;q) \phi_{m}(x;q) d_{q}x =
\left( \int_{-\alpha}^{\alpha} W^{*}(x;q) \phi_{n}^{2}(x;q) d_{q}x  \right) \delta_{n,m},
\end{equation*}
where 
\begin{equation}\label{eq:19}
W^{*}(x;q)=C(x)W(x;q),
\end{equation}
and $W(x;q)$ is solution of the Pearson $q$-difference equation
\begin{equation}\label{eq:20}
D_{q} \left(A(x) W(x;q) \right) = B(x) W(x;q),
\end{equation}
which is equivalent to
\begin{equation*}
\frac{W(qx;q)}{W(x;q)}=\frac{(q-1)x B(x)+A(x)}{A(qx)}.
\end{equation*}
Of course, the weight function defined in (\ref{eq:19}) must be positive and even and $A(x)W(x;q)$ must vanish at $x=\alpha$.
\end{theorem}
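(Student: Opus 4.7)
The plan is to follow the classical Sturm-Liouville strategy, adapted to the $q$-calculus: first recast (\ref{eq:17}) in a self-adjoint form via the Pearson equation (\ref{eq:20}), then compare the equations for $\phi_n$ and $\phi_m$, $q$-integrate by parts on $[-\alpha,\alpha]$, and finally split into parity cases where the odd term $\sigma_nE$ is killed.

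First, I would verify the self-adjoint identity
$$D_q\bigl[A(x)W(x;q)\,D_{q^{-1}}\phi_n(x;q)\bigr]=W(x;q)\bigl[A(x)\,D_qD_{q^{-1}}\phi_n(x;q)+B(x)\,D_q\phi_n(x;q)\bigr],$$
which follows from the $q$-Leibniz rule $D_q(fg)(x)=f(x)D_qg(x)+g(qx)D_qf(x)$ applied with $f=AW$ and $g=D_{q^{-1}}\phi_n$, together with the elementary relation $D_{q^{-1}}f(qx)=D_qf(x)$ and the Pearson equation (\ref{eq:20}). Multiplying (\ref{eq:17}) by $W(x;q)$ then recasts it as
$$D_q\bigl[A(x)W(x;q)\,D_{q^{-1}}\phi_n\bigr]+\bigl(\lambda_{n,q}C(x)+D(x)+\sigma_nE(x)\bigr)W(x;q)\phi_n=0.$$

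Next, I would write this equation for $\phi_n$ and the analogous one for $\phi_m$, multiply the first by $\phi_m$, the second by $\phi_n$, subtract, and $q$-integrate over $[-\alpha,\alpha]$ by splitting according to (\ref{eq:10}) and applying the $q$-integration by parts formula (\ref{eq:14}) on each half-interval. Using $D_{q^{-1}}f(qx)=D_qf(x)$, the bulk terms produced by integration by parts become symmetric in $n$ and $m$ and cancel in the subtraction, leaving only the boundary contribution
$$\Bigl[A(x)W(x;q)\bigl(\phi_m(x;q)D_{q^{-1}}\phi_n(x;q)-\phi_n(x;q)D_{q^{-1}}\phi_m(x;q)\bigr)\Bigr]_{-\alpha}^{\alpha}.$$
This vanishes: the hypothesis $A(\alpha)W(\alpha;q)=0$ kills the upper endpoint, and since $W^{*}=CW$ and $C$ are both even, $W$ itself is even, so $A(-\alpha)W(-\alpha;q)=A(\alpha)W(\alpha;q)=0$ kills the lower one. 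The $q$-regularity at $0$ noted after (\ref{eq:14}) handles the limits at the origin.

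After these cancellations the identity reduces to
$$(\lambda_{n,q}-\lambda_{m,q})\int_{-\alpha}^{\alpha}W^{*}(x;q)\phi_n\phi_m\,d_qx+(\sigma_n-\sigma_m)\int_{-\alpha}^{\alpha}E(x)W(x;q)\phi_n\phi_m\,d_qx=0.$$
I then split by parity. When $n$ and $m$ have opposite parities, $\phi_n\phi_m$ is odd, and since $W^{*}$, $E$ and $W$ are all even the integrands are odd, so both integrals vanish independently by (\ref{eq:12}) and orthogonality is automatic. When $n$ and $m$ have the same parity with $n\neq m$, one has $\sigma_n=\sigma_m$, eliminating the second term, and the identity collapses to $(\lambda_{n,q}-\lambda_{m,q})\int_{-\alpha}^{\alpha}W^{*}\phi_n\phi_m\,d_qx=0$; the inner product vanishes provided the eigenvalues are distinct.

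The main technical obstacle will be the first step — verifying the self-adjoint recasting and carefully managing both endpoints in the $q$-integration by parts, since (\ref{eq:14}) is stated only for $\int_0^a$ and must be applied twice via (\ref{eq:10}). The parity argument that neutralises the $\sigma_nE$ contribution is the novel ingredient relative to the standard non-symmetric $q$-Sturm-Liouville proof and mirrors the continuous case treated in Theorem~\ref{th:1}.
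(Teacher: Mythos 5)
Your proposal is correct and follows essentially the same route as the paper: recast the equation in self-adjoint form via the Pearson equation, multiply the $n$- and $m$-equations by $\phi_m$ and $\phi_n$ respectively, subtract, $q$-integrate by parts over $[-\alpha,\alpha]$ using $D_{q^{-1}}f(qx)=D_qf(x)$ so the bulk terms cancel, kill the boundary term with $A(\pm\alpha)W(\pm\alpha;q)=0$, and dispose of the $\bigl(\sigma_n-\sigma_m\bigr)E$ term by the same parity case analysis (equal parity makes the prefactor vanish, opposite parity makes the integrand odd). The only cosmetic difference is that you write the prefactor as $\sigma_n-\sigma_m$ where the paper writes $\tfrac{(-1)^m-(-1)^n}{2}$, and you make explicit the standard caveat that distinct eigenvalues are needed in the equal-parity case.
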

\begin{proof}
If the difference equation (\ref{eq:17}) is written in a self-adjoint form, then
\begin{multline}\label{eq:22}
D_{q} \left[A(x)W(x;q) D_{q^{-1}} \phi_{n}(x;q) \right]\\ + \left( \lambda_{n,q} C(x) + D(x) + \sigma_{n} E(x) \right) W(x;q) \phi_{n}(x;q)=0,
\end{multline}
and for $m$ we similarly have
\begin{multline}\label{eq:23}
D_{q} \left[A(x)W(x;q) D_{q^{-1}} \phi_{m}(x;q) \right] \\ + \left( \lambda_{m,q} C(x) + D(x) + \sigma_{m} E(x) \right) W(x;q) \phi_{m}(x;q)=0.
\end{multline}
By multiplying (\ref{eq:22}) by $\phi_{m}(x;q)$ and (\ref{eq:23}) by $\phi_{n}(x;q)$ and subtracting each other we get
\begin{multline}\label{eq:24}
\phi_{m}(x;q) D_{q} \left[ A(x) W(x) D_{q^{-1}} \phi_{n}(x;q) \right] - \phi_{n}(x;q) D_{q} \left[ A(x) W(x) D_{q^{-1}} \phi_{m}(x;q) \right] \\
 + \left( \lambda_{n,q}-\lambda_{m,q} \right)C(x)W(x;q) \phi_{n}(x;q)\phi_{m}(x;q)\\
+\frac{(-1)^{m}-(-1)^{n}}{2}E(x)W(x;q) \phi_{n}(x;q)\phi_{m}(x;q)=0.
\end{multline}
A simple but important idea can appear here: ``The $q$-integration of any odd integrand over a symmetric interval is equal to zero". Therefore, $q$-integrating on both sides of (\ref{eq:24}) over the symmetric interval $[-\alpha,\alpha]$ yields
\begin{multline}\label{eq:25}
\int_{-\alpha}^{\alpha} \phi_{m}(x;q) D_{q} \left( A(x)W(x;q) D_{q^{-1}} \phi_{n}(x;q) \right) d_{q} x \\
 - \int_{-\alpha}^{\alpha} \phi_{n}(x;q) D_{q} \left( A(x)W(x;q) D_{q^{-1}} \phi_{m}(x;q) \right) d_{q} x  \\ + \left( \lambda_{n,q}-\lambda_{m,q} \right) \int_{-\alpha}^{\alpha}  C(x)W(x;q) \phi_{n}(x;q)\phi_{m}(x;q) d_{q}x  \\
+\frac{(-1)^{m}-(-1)^{n}}{2} \int_{-\alpha}^{\alpha}  E(x)W(x;q) \phi_{n}(x;q)\phi_{m}(x;q) d_{q}x=0.
\end{multline}
Now, by using the rule of $q$-integration by parts, relation (\ref{eq:25}) is transformed to
\begin{multline}\label{eq:26}
\left[ A(x) W(x;q) \phi_{m}(x;q) D_{q^{-1}} \phi_{n}(x;q) \right]_{-\alpha}^{\alpha} \\
- \int_{-\alpha}^{\alpha} A(qx)W(qx;q) D_{q^{-1}} \phi_{n}(qx;q) D_{q} \phi_{m}(x;q) d_{q} x \\
-\left[ A(x) W(x;q) \phi_{n}(x;q) D_{q^{-1}} \phi_{m}(x;q) \right]_{-\alpha}^{\alpha} \\
+ \int_{-\alpha}^{\alpha} A(qx)W(qx;q) D_{q^{-1}} \phi_{m}(qx;q) D_{q} \phi_{n}(x;q) d_{q} x \\
+ \left( \lambda_{n,q}-\lambda_{m,q} \right) \int_{-\alpha}^{\alpha}  C(x)W(x;q) \phi_{n}(x;q)\phi_{m}(x;q) d_{q}x  \\
+\frac{(-1)^{m}-(-1)^{n}}{2} \int_{-\alpha}^{\alpha}  E(x)W(x;q) \phi_{n}(x;q)\phi_{m}(x;q) d_{q}x=0.
\end{multline}
Since
\begin{equation*}
D_{q^{-1}} f(qx)=D_{q}f(x),
\end{equation*}
relation (\ref{eq:26}) is simplified as
\begin{multline}\label{eq:28}
\left[ A(x) W(x;q) \left( \phi_{m}(x;q) D_{q^{-1}} \phi_{n}(x;q) - \phi_{n}(x;q) D_{q^{-1}} \phi_{m}(x;q) \right) \right]_{-\alpha}^{\alpha} \\
+ \left( \lambda_{n,q}-\lambda_{m,q} \right) \int_{-\alpha}^{\alpha}  C(x)W(x;q) \phi_{n}(x;q)\phi_{m}(x;q) d_{q}x  \\
+\frac{(-1)^{m}-(-1)^{n}}{2} \int_{-\alpha}^{\alpha}  E(x)W(x;q) \phi_{n}(x;q)\phi_{m}(x;q) d_{q}x=0.
\end{multline}
On the other hand, $W(x;q)$ is a symmetric solution for the Pearson $q$-difference equation (\ref{eq:20}). Hence, if in (\ref{eq:28}) we take
\begin{equation*}	
A(-\alpha)W(-\alpha;q)=A(\alpha)W(\alpha;q)=0,
\end{equation*}
then to prove the orthogonality property it remains to show that
\begin{equation*}
F(m,n)=\frac{(-1)^{m}-(-1)^{n}}{2} \int_{-\alpha}^{\alpha}  E(x)W(x;q) \phi_{n}(x;q)\phi_{m}(x;q) d_{q}x=0.
\end{equation*}
For this purpose, four cases should be considered for values $m$, $n$, which are respectively as 
follows:
\begin{enumerate}
\item If both $m$ and $n$ are even (or odd), then $F(n,m)=0$ because we have $F(2i,2j)=F(2i+1,2j+1)=0$.

\item If one of the two mentioned values is odd and the other one is even (or conversely) then
\begin{equation}\label{eq:31}
F(2i,2j+1)=\int_{-\alpha}^{\alpha} E(x)W(x;q) \phi_{2j+1}(x;q) \phi_{2i}(x;q) d_{q}x.
\end{equation}
\end{enumerate}

Since $E(x)$, $W(x;q)$ and $\phi_{2i}(x;q)$ are assumed to be even functions and $\phi_{2j+1}(x;q)$ is odd in (\ref{eq:31}), its integrand would be an odd function and therefore $F(2i,2j+1)=0$. This results similarly holds for the case $m=2i+1$ and $n=2j$, i.e. $F(2i+1,2j)=0$.
$\text{ }$ \hfill \end{proof}

\section{Some illustrative examples}

In this section we consider 3 examples of the extended equation (\ref{eq:17}) whose solutions are symmetric $q$-orthogonal polynomials.

For this purpose, let us define some notations related to $q$-polynomials. The $q$-shifted factorial is defined by
\begin{equation*}
 \qf xqn=\prod_{j=0}^{n-1}(1-q^jx),\qquad n=0,1,\ldots,
\end{equation*}
the $q$-number by
\begin{equation*}
\qnum{z}=\frac{q^{z}-1}{q-1}, \quad z \in {\mathbf{C}},
\end{equation*}
and the {basic hypergeometric series} is defined by
\begin{equation*}
 \qhyper rs {\vectj ar}{\vectj bs}{q}{z}=
 \sum_{k=0}^{\infty}\frac{\qf{a_1}qk\ldots \qf{a_r}qk}
                                 {\qf qqk\qf{b_1}qk\ldots \qf{b_s}qk}
               \left((-1)^kq^{\binom{k}{2}}\right)^{1+s-r}z^k.
\end{equation*}
Here $r,\,s\in\Zplus$ and $\vectj ar$, $\vectj bs$, $z$ $\in \C$.
In order to have a well--defined series the condition $ \vectj
bs \neq q^{-k}$ ($k=0,1,\ldots $) is required.

\subsection{Example 1}

Let us consider the $q$-difference equation
\begin{multline*}
x^{2}(1-x^{2}) D_{q} D_{q^{-1}} \phi_{n}(x;q)
+qx \left(-(q^{2}+q+1) x^2+q+1 \right) D_{q} \phi_{n}(x;q) 
\\+\left(\qnum{n} \left( q (q^{2}+q+1)-\qnum{1-n} \right) x^{2} -q (1+q) \sigma_{n} \right) \phi_{n}(x;q) =0,
\end{multline*}
as a special case of (\ref{eq:17}). If we set
\begin{equation*}
\phi_{n}(x;q)=\sum_{j=0}^{\infty} a_{j}(n) x^{j},
\end{equation*}
then a solveble recurrence relation for the coefficients $\{a_{j}(n)\}_{j=0}^{\infty}$ is derived giving rise eventually to the following representation
\begin{equation}\label{eq:icod}
\phi_{n}(x;q)=x^{\sigma_{n}} \, \qhyper{2}{1}
{q^{\sigma_{n}-n},q^{n+\sigma_{n}-1} \left(q^4-q+1\right)}
{q^{2 \sigma_{n}+1} \left(q^3-q+1\right)}
{q^{2}}
{q^2 x^2}.
\end{equation}

From Section 2, it is known that the above sequence satisfies the orthogonality relation
\begin{equation*}
\int_{-1}^{1} W_{1}^{*}(x;q) \phi_{n}(x;q) \phi_{m}(x;q) d_{q}x = 
\left( \int_{-1}^{1} W_{1}^{*}(x;q) \phi_{n}^{2}(x;q) d_{q}x \right) \delta_{n,m},
\end{equation*}
in which $W_{1}^{*}(x;q)=C(x)W_{1}(x;q)$ is the main weight function and $W_{1}(x;q)$ satisfies the equation
\begin{equation}\label{eq:360}
\frac{W_{1}(qx;q)}{W_{1}(x;q)}=\frac{(q^{4}-q+1) x^2-q^3+q-1}{q^2 \left(q^2 x^2-1\right)}.
\end{equation}
Up to a periodic function, a solution of the equation (\ref{eq:360}) is in the form
\begin{equation*}
W_{1}(x;q)=\frac{
\qf{q^2 x^2}{q^{2}}{\infty}
   \left(q^3-q+1\right)^{\frac{\log \left(x^2\right)}{2 \log(q)}}}
 {x^2 \qf{\frac{q^{4}-q+1}{q^3-q+1}x^2}{q^{2}}{\infty} }=W_{1}(-x;q).
\end{equation*}
In this sense, note that
\begin{equation*}
\lim_{q \uparrow 1} W_{1}^{*}(x;q)=\lim_{q \uparrow 1} x^{2} W_{1}(x;q)=\frac{x^{2}}{\sqrt{1-x^{2}}},
\end{equation*}
which gives the weight function of the fifth kind Chebyshev polynomials \cite{MR2270049}.

To compute the norm square value of the symmetric polynomials (\ref{eq:icod}), we can use Favard's theorem \cite{MR0481884}, which says if $\{P_{n}(x;q)\}$ satisfies the recurrence relation
\begin{equation*}
x P_{n}(x;q) = A_{n} P_{n+1}(x;q) + B_{n} P_{n}(x;q) + C_{n} P_{n-1}(x;q), \qquad n=0,1,2,\dots,
\end{equation*}
where $P_{-1}(x;q)=0$, $P_{0}(x;q)=1$, $A_{n}$, $B_{n}$, $C_{n}$ real and $A_{n} C_{n+1}>0$ for $n=0,1,2,\dots$, then there exists a weight function $W^{*}(x;q)$ so that
\begin{equation*}
\int_{-\alpha}^{\alpha} W^{*}(x;q) P_{n}(x;q) P_{m}(x;q) d_{q}x=
\left( \prod_{i=0}^{n-1} \frac{C_{i+1}}{A_{i}} \int_{-\alpha}^{\alpha} W^{*}(x;q) d_{q}x \right) \delta_{n,m}.
\end{equation*}
It is clear that the Favard's theorem also holds for the monic type of symmetric $q$-polynomials in which $A_{n}=1$ and $B_{n}=0$. So, if $\bar{\phi}_{n}(x;q)$ is consider as the monic form of the symmetric $q$-polynomials (\ref{eq:icod}), then after some calculations they would satisfy the following three term recurrence relation
\begin{equation}\label{eq:45}
\bar{\phi}_{n+1}(x;q)=x \bar{\phi}_{n}(x;q) - \gamma_{n} \bar{\phi}_{n-1}(x;q), \qquad (\text{with } \bar{\phi}_{0}(x;q)=1, \quad \bar{\phi}_{1}(x;q)=x),
\end{equation}
in which
\begin{equation*}
\gamma_{2m}=\frac{q^{2 m+1} \left(q^{2 m}-1\right)
   \left(\left(q^4-q+1\right) q^{2 m}+\left(-q^3+q-1\right)
   q^2\right)}{\left(q^4-q+1\right)^2 q^{8
   m}-\left(q^2+1\right) \left(q^4-q+1\right) q^{4 m+1}+q^4},
\end{equation*}
and
\begin{equation*}
\gamma_{2m+1}=\frac{q^{2 m} \left(\left(q^3-q+1\right) q^{2 m+1}-1\right)
   \left(\left(q^4-q+1\right) q^{2
   m}-q\right)}{\left(q^4-q+1\right)^2 q^{8
   m+1}-\left(q^2+1\right) \left(q^4-q+1\right) q^{4 m}+q}.
\end{equation*}

Note that
\begin{equation*}
\lim_{q \uparrow 1} \gamma_{n}=\frac{(4 n+2) \sigma_{n}+(n-1) n}{4 n (n+1)},
\end{equation*}
which coincides with \cite[Eq. (61.1)]{MR2270049}.

Therefore, the norm square value of the monic type of the $q$-polynomials (\ref{eq:icod}) takes the form
\begin{equation*}
\int_{-1}^{1} \bar{\phi}_{n}^{2}(x;q) W_{1}^{*}(x;q) d_{q}x= d_{n}^{2} \int_{-1}^{1} 
\frac{
\qf{q^2 x^2}{q^{2}}{\infty}
   \left(q^3-q+1\right)^{\frac{\log \left(x^2\right)}{2 \log(q)}}}
 { \qf{\frac{q^{4}-q+1}{q^3-q+1}x^2}{q^{2}}{\infty} } d_{q}x,
\end{equation*}
where
\begin{multline*}
d_{2m}^{2}=\frac{(q-1)^2 \left(q^2+q+1\right) \left(q^3+q^2+q-1\right) q^{m (2
   m-1)-2} \left(q^3-q+1\right)^{m+1}}{ \left(q
   \left(q^4-q+1\right);q^4\right){}_m} \\
   \times 
   \frac{\left(q^2;q^2\right){}_m
   \left(q^3-1+\frac{1}{q};q^2\right){}_m \left(q
   \left(q^3-q+1\right);q^2\right){}_m
   \left(\frac{q^4-q+1}{q^5-q^3+q^2};q^2\right){}_{m+1}}{\left(
   q^4-q^2+1\right)
   \left(q-\frac{1}{q^2}+\frac{1}{q^3};q^4\right){}_{m+1}
   \left(q^3-1+\frac{1}{q};q^4\right){}_m
   \left(q^3-1+\frac{1}{q};q^4\right){}_{m+1}} ,
\end{multline*}
and
\begin{multline*}
d_{2m+1}^{2}=\frac{(q-1)^2 \left(q^2+q+1\right) \left(q^3+q^2+q-1\right) q^{2
   m^2+m-2} \left(q^3-q+1\right)^{m+1}}{\left(q \left(q^4-q+1\right);q^4\right){}_{m+1}} \\
   \times \frac{\left(q^2;q^2\right){}_m
   \left(q^3-1+\frac{1}{q};q^2\right){}_{m+1} \left(q
   \left(q^3-q+1\right);q^2\right){}_{m+1}
   \left(\frac{q^4-q+1}{q^5-q^3+q^2};q^2\right){}_{m+1}}{\left(
   q^4-q^2+1\right)
   \left(q-\frac{1}{q^2}+\frac{1}{q^3};q^4\right){}_{m+1}
   \left(\left(q^3-1+\frac{1}{q};q^4\right){}_{m+1}\right){}^2
   }.
\end{multline*}

\subsection{Example 2}

Consider the $q$-difference equation
\begin{multline}\label{eq:368}
x^{2}(1-x^2) D_{q} D_{q^{-1}} \phi_{n}(x;q)
+qx\left(  -\qnum{5} x^{2}+q+1 \right) D_{q} \phi_{n}(x;q) 
\\+\left(
\qnum{n} (q \qnum{5}- \qnum{1-n} ) x^{2} - q (q+1) \sigma_{n}
 \right) \phi_{n}(x;q) =0,
\end{multline}
as a special case of (\ref{eq:17}).

Following the approach of example 1, we can obtain the polynomial solution of equation (\ref{eq:368}) eventually
\begin{equation*}
\phi_{n}(x;q)=
x^{\sigma_{n}} 
\qhyper{2}{1}
{q^{\sigma_{n}-n},q^{n+\sigma_{n}-1} \left(q^6-q+1\right)}
{q^{2 \sigma_{n}+1}\left(q^3-q+1\right)}
{q^2}
{q^2 x^2}.
\end{equation*}

Again, this sequence satisfies an orthogonality relation in the form
\begin{equation*}
\int_{-1}^{1} W_{2}^{*}(x;q) \phi_{n}(x;q) \phi_{m}(x;q) d_{q}x = 
\left( \int_{-1}^{1} W_{2}^{*}(x;q) \phi_{n}^{2}(x;q) d_{q}x \right) \delta_{n,m},
\end{equation*}
where $W_{2}^{*}(x;q)=C(x)W_{2}(x;q)$ is the main weight function and $W_{2}(x;q)$ satisfies the equation
\begin{equation}\label{eq:3360}
\frac{W_{2}(qx;q)}{W_{2}(x;q)}=\frac{(q^6-q+1) x^2-q^3+q-1}{q^2 \left(q^2
   x^2-1\right)}.
\end{equation}

Up to a periodic function, a solution of the equation (\ref{eq:3360}) is as
\begin{equation*}
W_{2}(x;q)=\frac{\left(q^2 x^2;q^2\right){}_{\infty }
   \left(q^3-q+1\right)^{\frac{\log
   \left(x^2\right)}{2 \log
   (q)}}}{x^{2} \,\left(\frac{q^6-q+1
  }{q^3-q+1}  x^2;q^2\right){}_{\infty }}=W_{2}(-x;q).
\end{equation*}
Note that
\begin{equation*}
\lim_{q \uparrow 1} W_{2}^{*}(x;q)=\lim_{q \uparrow 1} x^{2} W_{2}(x;q)=x^{2}\sqrt{1-x^{2}},
\end{equation*}
which gives the weight function of the sixth kind Chebyshev polynomials \cite{MR2270049}.

The monic polynomials $\bar{\phi}_{n}(x;q)$ solution of (\ref{eq:368}) satisfy a three term recurrence relation of type (\ref{eq:45}) with
\begin{equation*}
\gamma_{2m}=\frac{q^{2 m+1} \left(q^m-1\right) \left(q^m+1\right)
   \left(\left(q^6-q+1\right) q^{2 m}+\left(-q^3+q-1\right)
   q^2\right)}{\left(q^6-q+1\right)^2 q^{8
   m}-\left(q^2+1\right) \left(q^6-q+1\right) q^{4 m+1}+q^4},
\end{equation*}
and
\begin{equation*}
\gamma_{2m+1}=\frac{q^{2 m} \left(\left(q^3-q+1\right) q^{2 m+1}-1\right)
   \left(\left(q^6-q+1\right) q^{2
   m}-q\right)}{\left(q^6-q+1\right)^2 q^{8
   m+1}-\left(q^2+1\right) \left(q^6-q+1\right) q^{4 m}+q}.
\end{equation*}

In this direction we have
\begin{equation*}
\lim_{q \uparrow 1} \gamma_{n}=\frac{(4 n+6) \sigma_{n}+n (n+1)}{4 (n+1) (n+2)},
\end{equation*}
which coincides with \cite[Eq. (68.1)]{MR2270049}.

Thus, the norm square value of the monic sequence $\bar{\phi}_{n}(x;q)$ is derived as
\begin{equation*}
\int_{-1}^{1} \bar{\phi}_{n}^{2}(x;q) W_{2}^{*}(x;q) d_{q}x=  d_{n}^{2} \int_{-1}^{1} \frac{\left(q^2 x^2;q^2\right){}_{\infty }
   \left(q^3-q+1\right)^{\frac{\log
   \left(x^2\right)}{2 \log
   (q)}}}{\,\left(\frac{q^6-q+1
  }{q^3-q+1}  x^2;q^2\right){}_{\infty }} d_{q}x,
\end{equation*}
where
\begin{multline*}
d_{2m}^{2}=\frac{(1-q) \left(q^5+q^4+q^3-1\right)
   \left(\qnum{6}-2\right) q^{m (2 m-1)-2}
   \left(q^3-q+1\right)^{m+1}}{\left(q
   \left(q^6-q+1\right);q^4\right){}_m} \\
\times \frac{(-1;q)_{m+1} (q;q)_m \left(q
   \left(q^3-q+1\right);q^2\right){}_m
   \left(q^5-1+\frac{1}{q};q^2\right){}_m
   \left(\frac{q^6-q+1}{q^5-q^3+q^2};q^2\right){}_{m+1}}{2
   \left(q^5+q^2-1\right)
   \left(q^5-1+\frac{1}{q};q^4\right){}_m
   \left(q^5-1+\frac{1}{q};q^4\right){}_{m+1}
   \left(\frac{q^6-q+1}{q^3};q^4\right){}_{m+1} },
\end{multline*}
and
\begin{multline*}
d_{2m+1}^{2}=(1-q) \left(q^5+q^4+q^3-1\right)
   \left(\qnum{6}-2\right) q^{2 m^2+m-2}
   \left(q^3-q+1\right)^{m+1} \\
\times \frac{(-1;q)_{m+1} (q;q)_m \left(q
   \left(q^3-q+1\right);q^2\right){}_{m+1}
   \left(q^5-1+\frac{1}{q};q^2\right){}_{m+1}
   \left(\frac{q^6-q+1}{q^5-q^3+q^2};q^2\right){}_{m+1}}{2
   \left(q^5+q^2-1\right)
   \left(\left(q^5-1+\frac{1}{q};q^4\right){}_{m+1}\right){}^2
   \left(\frac{q^6-q+1}{q^3};q^4\right){}_{m+1} \left(q
   \left(q^6-q+1\right);q^4\right){}_{m+1}}.
\end{multline*}

\subsection{Generalized $q$-Hermite polynomials}
Consider the $q$-difference equation
\begin{multline*}
x^{2}\left( \left(1-q^2\right) x^2-1 \right)D_{q} D_{q^{-1}} \phi_{n}(x;q)
+(q+1) x \left(x^2+p\right) D_{q} \phi_{n}(x;q) 
\\+\left(
q \qnum{-n} x^{2} -\sigma_{n} p  
 \right) \phi_{n}(x;q) =0,
\end{multline*}
as a special case of (\ref{eq:17}), with the following polynomial solution
\begin{equation}\label{eq:64n}
\phi_{n}(x;p;q)=x^{\sigma_{n}} \qhyper{2}{1}
{q^{\sigma_{n}-n},0}
{q^{2 \sigma_{n}+1} \left(-p  q^2+p +1\right)}
{q^2}{q^2 \left(1-q^2\right) x^2}.
\end{equation}

%%%%%%%%%%%
The polynomial sequence (\ref{eq:64n}) satisfies an orthogonality relation as
\begin{equation*}
\int_{-\alpha}^{\alpha} W_{3}^{*}(x;p;q) \phi_{n}(x;p;q) \phi_{m}(x;p;q) d_{q}x = 
\int_{-\alpha}^{\alpha} W_{3}^{*}(x;p;q) \phi_{n}^{2}(x;p;q) d_{q}x \delta_{n,m},
\end{equation*}
in which $\alpha=1/\sqrt{1-q^{2}}$, $W_{3}^{*}(x;p;q)=C(x)W_{3}(x;p;q)$ is the main weight function and $W_{3}(x;p;q)$ satisfies the equation
\begin{equation}\label{eq:4460}
\frac{W_{3}(qx;p;q)}{W_{3}(x;p;q)}=\frac{-p  q^2+p +1}{q^2+\left(q^2-1\right) q^4 x^2}.
\end{equation}

Up to a periodic function, a solution of the equation (\ref{eq:4460}) is in the form
\begin{equation*}
W_{3}(x;p;q)=\frac{\left(p  \left(1-q^2\right)+1\right)^{\frac{\log
   \left(x^2\right)}{2 \log (q)}} 
   \qf{q^2 \left(1-q^2\right)
   x^2}{q^{2}}{\infty} }
{x^{2}}=W_{3}(-x;p;q),
\end{equation*}
where
\begin{equation*}
\lim_{q \uparrow 1} W_{3}^{*}(x;p;q)=\lim_{q \uparrow 1} x^{2} W_{3}(x;p;q)=x^{-2p}e^{-x^{2}},
\end{equation*}
appearing the weight function of generalized Hermite polynomials  \cite[Eq. (80)]{MR2421844}.

The monic polynomials type of polynomials (\ref{eq:64n}), $\bar{\phi}_{n}(x;q)$, satisfy a three term recurrence relation of type (\ref{eq:45}) with
\begin{equation*}
\gamma_{2m}=-\frac{\left(p  \left(q^2-1\right)-1\right) q^{2 m-1}
   \left(q^{2 m}-1\right)}{q^2-1},
\end{equation*}
and
\begin{equation*}
\gamma_{2m+1}=\frac{\left(-p  q^2+p +1\right) q^{4 m+1}-q^{2
   m}}{q^2-1},
\end{equation*}
such that
\begin{equation*}
\lim_{q \uparrow 1} \gamma_{n}=\frac{1}{2} \left(p  \left((-1)^n-1\right)+n\right),
\end{equation*}
exactly coincides with \cite[Eq. (79.1)]{MR2270049}.

Consequently, the norm square value corresponding to the the monic type of polynomials (\ref{eq:64n}) takes the form
\begin{multline*}
\int_{-\alpha}^{\alpha} \bar{\phi}_{n}^{2}(x;q)W_{3}^{*}(x;p;q) d_{q}x\\
= d_{n}^{2}  \int_{-\alpha}^{\alpha} \left(p  \left(1-q^2\right)+1\right)^{\frac{\log
   \left(x^2\right)}{2 \log (q)}} 
   \qf{q^2 \left(1-q^2\right)x^2}{q^{2}}{\infty}\, d_{q}x,
\end{multline*}
where
\begin{equation*}
d_{2m}=\frac{1}{2} q^{m (2 m-1)}  
\frac{\left(-p q^2+p +1\right)^m}{\left(q^2-1\right)^{2 m}} (-1;q)_{m+1} (q;q)_m \left(-p  q^3+p
    q+q;q^2\right){}_m,
\end{equation*}
and
\begin{equation*}
d_{2m+1}=\frac{1}{2} q^{m (2 m+1)} 
  \frac{ \left(-p  q^2+p +1\right)^m}{\left(q^2-1\right)^{2 m+1}}  (-1;q)_{m+1} (q;q)_m
   \left(-p  q^3+p  q+q;q^2\right){}_{m+1}.
\end{equation*}

We finally point out that if we choose $p=0$, then the weight function is reduced to the weight function of discrete $q$-Hermite I polynomials,
\begin{equation*}
W(x;0;q)=\frac{1}{\left(\left(1-q^2\right) x^2;q^2\right){}_{\infty}},
\end{equation*}
as well as
\begin{equation*}
\phi_{n}(x;0;q)=c_{n} h_{n}(x\sqrt{1-q^{2}};q),
\end{equation*}
where $h_{n}(x;q)$ denotes the discrete $q$-Hermite I polynomials \cite[Eq. (14.28.1)]{MR2656096}.

\providecommand{\bysame}{\leavevmode\hbox to3em{\hrulefill}\thinspace}
\providecommand{\MR}{\relax\ifhmode\unskip\space\fi MR }
% \MRhref is called by the amsart/book/proc definition of \MR.
\providecommand{\MRhref}[2]{%
  \href{http://www.ams.org/mathscinet-getitem?mr=#1}{#2}
}
\providecommand{\href}[2]{#2}

\end{document}